\documentclass[11pt]{article}

\usepackage{enumerate, amsmath, amssymb,latexsym}
\usepackage{amsthm, color, graphicx, hyperref, amscd}
\usepackage[T1]{fontenc}
\usepackage[utf8]{inputenc}
\usepackage[american]{babel}
\usepackage{lmodern}
\usepackage{microtype}
\usepackage{mathtools}
\usepackage{amssymb}
\usepackage{booktabs}
\usepackage{enumitem}
\usepackage{xcolor}
\usepackage[nameinlink,noabbrev]{cleveref}
\usepackage{todonotes, comment}
\newtheorem{theorem}{Theorem}[section]
\newtheorem{proposition}[theorem]{Proposition}
\newtheorem{lemma}[theorem]{Lemma}
\newtheorem{corollary}[theorem]{Corollary}
\newtheorem{definition}[theorem]{Definition}

\newtheorem{remark}[theorem]{Remark}

\newtheorem{problem}[theorem]{\bf Problem}

\newcommand{\Q}{Q}
\newcommand{\F}{\mathbb{F}}
\newcommand{\isect}{\iota}
\newcommand{\1}{\mathbf{1}}
\newcommand{\R}{\mathbb{R}}
\newcommand{\Z}{\mathbb{Z}}

\newcommand\cH{{\mathcal H}}

\title{Small Boolean Sections in Alon–Füredi Covers}
\author{Zolt\'an L\'or\'ant Nagy\thanks{
		E\"otv\"os Lor\'and University, Budapest, Hungary. The author is supported by the J\'anos Bolyai Scholarship of the Hungarian Academy of Sciences and by the NRDI EXCELLENCE-24 grant no. 151504 Combinatorics and Geometry. Email: zoltan.lorant.nagy@ttk.elte.hu}}
\date{}

\begin{document}
	
	\maketitle
	\begin{abstract}
		Alon and F\"uredi proved that at least $n$ affine hyperplanes are required to cover $\{0,1\}^n\setminus\{\textbf{0}\}$ while avoiding the origin, and that this bound is sharp. We study how small the largest Boolean intersection among the hyperplanes can be in a cover attaining this minimum. Let $F(n)$ denote the minimum possible value of $\max_{H\in\mathcal H} |H\cap\{0,1\}^n|$ over all families $\cH$ of $n$ affine hyperplanes covering $\{0,1\}^n\setminus{\mathbf{0}}$ and avoiding the origin. 
		We give an explicit construction, proving that 
		$F(n)=(1+o(1))\frac{2^n}{n},$
		and hence asymptotically attain the averaging lower bound.\\
		\textit{Keywords}: {Boolean cube, affine hyperplane cover, Hamming layers,  intersection size, Combinatorial Nullstellensatz}
	\end{abstract}
	
	\maketitle
	
	\section{Introduction}
	
	Let $\Q_n=\{0,1\}^n$ denote the $n$-dimensional  hypercube, and let $\Q_n^*=\Q_n\setminus\{\textbf{0}\}$. A classical theorem of Alon and F\"uredi~\cite{AlonFuredi1993} states that every family of affine hyperplanes covering $\Q_n^*$ while avoiding the origin $\textbf{0}$ contains at least $n$ hyperplanes. 
	Equality is attained both by the coordinate hyperplanes
	\[
	x_i=1,\qquad i\in[n],
	\]
	and by the Hamming layers
	\[
	x_1+\cdots+x_n=j,\qquad 1\le j\le n.
	\]
	The constructions differ very much in nature. 
	The first equality construction operates with hyperplanes containing $2^{n-1}$ hypercube vertices, and  for \(n\geq2\), every two distinct coordinate hyperplanes meet in
	\(2^{n-2}\) Boolean vertices. By contrast, the Hamming layers partition
	\(Q_n^*\), but their Boolean intersection sizes vary considerably.  Its largest hyperplane intersection  contains
	\[
	\binom{n}{\lfloor n/2\rfloor}
	=\left(\sqrt{\frac{2}{\pi}}+o(1)\right)\frac{2^n}{\sqrt n}
	\]
	vertices.
	
	This motivates the following minimax refinement of the Alon–Füredi problem: among equality cases using exactly $n$ hyperplanes, minimize the largest Boolean intersection of an individual hyperplane.
	
	\begin{definition}\label{def:F}
		For an affine hyperplane $H\subseteq\R^n$, define its \emph{Boolean intersection size} by
		\[
		\isect_n(H)=|H\cap\Q_n|.
		\]
		Let $F(n)$ be the minimum value of
		\[
		\max_{H\in\mathcal H}\isect_n(H)
		\]
		over all families $\mathcal H$ of exactly $n$ affine hyperplanes that cover $\Q_n^*$ and avoid $\textbf{0}$.
	\end{definition}
	
	\begin{problem}
		Determine the minimum possible largest intersection size  $F(n)=
		\min \max_{H\in\mathcal H}\isect_n(H)$ that can be attained in a cover $\Q_n^*$ by  a family  $\cH$ of $n$ affine hyperplanes which all  avoid the origin $\textbf{0}$.
	\end{problem}
	
	Counting incidences gives the immediate lower bound
	\begin{equation}\label{eq:averaging-lb}
		F(n)\ge \left\lceil\frac{2^n-1}{n}\right\rceil.
	\end{equation}
	Our main result shows that this elementary bound is asymptotically sharp.
	
	\begin{theorem}\label{thm:main}
		We have
		\[
		F(n)=(1+o(1))\frac{2^n}{n}.
		\]
		Equivalently,
		\[
		\lim_{n\to\infty}\frac{nF(n)}{2^n}=1.
		\]
	\end{theorem}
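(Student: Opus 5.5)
The lower bound is the averaging inequality \eqref{eq:averaging-lb}: the $n$ hyperplanes together contain all $2^n-1$ points of $\Q_n^*$, so one of them contains at least $(2^n-1)/n$. All the work is therefore in the upper bound $F(n)\le(1+o(1))2^n/n$, which I would prove by an explicit construction. The construction interpolates between the two extremal examples in the introduction. Coordinate hyperplanes give each class of the partition ``first nonzero coordinate $=i$'' its own hyperplane. Hamming layers instead split a block of coordinates by weight.

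\emph{Basic skeleton.} Partition $[n]$ into consecutive blocks $B_1,\dots,B_r$ with sizes $k_1,\dots,k_r$ and prefix sums $P_t=k_1+\dots+k_t$. A point of $\Q_n^*$ has a unique first block $B_t$ on which it is nonzero. For each $t$ and each $1\le j\le k_t$, take
\[
H_{t,j}:\ \sum_{i\in B_t}x_i+\sum_{i\in B_1\cup\dots\cup B_{t-1}}c_{i}^{(t,j)}x_i=j .
\]
This gives $\sum_t k_t=n$ hyperplanes. Each avoids $\mathbf 0$ because $j\ge1$. A point whose first nonzero block is $B_t$ has all earlier coordinates zero, so it lies on $H_{t,|x_{B_t}|}$, and the family covers $\Q_n^*$. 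The coordinates after $B_t$ are free, so
\[
\isect_n(H_{t,j})=2^{\,n-P_t}\cdot\#\{z\in\{0,1\}^{P_t}: z\in H_{t,j}\}.
\]
If the $c$'s are generic, for instance huge and rationally independent, only points with zero earlier part survive, and the count is $2^{n-P_t}\binom{k_t}{j}$. This is at most $2^n/n$ as soon as $2^{P_t}\ge n\binom{k_t}{\lfloor k_t/2\rfloor}$. That inequality holds for every block after the first few, so later hyperplanes have \emph{spare capacity}.

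\emph{Balancing (the main obstacle).} The problem is the first blocks. For $t=1$ the count is $2^{n-k_1}\binom{k_1}{j}\approx 2^n/\sqrt{k_1}$ in the middle layers, which is far too big. My plan is to choose the $c$'s non-generically, so that late hyperplanes deliberately absorb points with nonzero early support. Fix a point $z$ on the earlier blocks with $c^{(t,j)}\cdot z=j-s$. Then $H_{t,j}$ contains all points with that early part $z$ and weight $s$ on $B_t$. The first-block hyperplanes then need only cover what is left over.

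To make this work I would also let the first-block hyperplanes depend on later coordinates, using
\[
\sum_{i\in B_1}x_i+\sum_{i\notin B_1}d^{(j)}_i x_i=j .
\]
The $d$'s should be chosen, say as large distinct integers supported on a few late coordinates, so that each such hyperplane keeps only a $\Theta(1/\binom{k_1}{j}\cdot 2^{k_1}/n)$ fraction of its layer. The discarded points, those with first block nonzero and the chosen late coordinates in the wrong pattern, must then be swallowed by later hyperplanes whose early coefficients $c$ are tuned to exactly those patterns. Overall the bookkeeping is a flow or transportation problem. Supply is the sizes $2^{n-P_t}\binom{k_t}{j}$ for early blocks, capacity is $2^n/n$ per hyperplane, and the total supply $2^n-1$ matches total capacity up to $1+o(1)$. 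I would take $k_1\approx\log_2 n$ and the remaining blocks of size $1$ or slowly growing.

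I would then verify two things. First, the coefficient choices realize a near-uniform assignment up to rounding errors of order $2^n/n^{1+\varepsilon}$. Second, no hyperplane picks up unintended solutions; I would control these with the genericity of the large coefficients, via a Littlewood--Offord type count on the few non-generic coordinates. This rerouting step is where I expect most of the difficulty to lie.
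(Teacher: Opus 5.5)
The lower bound is fine: it is exactly the paper's averaging bound. The gap is the upper bound, which is the whole content of the theorem. Your skeleton is a valid Alon--F\"uredi cover, but as you note it fails on $B_1$. The balancing step meant to repair it is only a heuristic: no coefficients $c$, $d$ are specified and nothing is verified.

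This is not a small correction. With $k_1\approx\log_2 n$, the $k_1$ hyperplanes attached to $B_1$ can hold at most $(1+o(1))k_1 2^n/n=o(2^n)$ points. So $(1-o(1))2^n$ points with $x_{B_1}\neq\mathbf{0}$ must lie on later hyperplanes. Each of those has Boolean section a union of cylinders $\{z\}\times\Q_{n-P_t}$ over the prefixes $z$ solving one linear equation. Your first-nonzero-block argument says nothing about whether these points get covered. Moreover, $n$ sets of size at most $(1+o(1))2^n/n$ covering $2^n-1$ points must form an almost-partition into almost-equal sections.

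A flow or transportation count cannot certify this, because hyperplane sections are rigid and points cannot be assigned to hyperplanes freely. Generic or huge coefficients, controlled by Littlewood--Offord, give only \emph{upper} bounds on $\isect_n$. Genericity actually shrinks sections, which works against coverage. What you need is exact coverage together with asymptotically exact section sizes, and the proposal supplies neither.

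The missing idea is a structured mechanism that equalizes sections. The paper's cover fits your template, but with the block roles reversed:
\begin{itemize}
\item $B_1$ is a block of $m=n-r$ coordinates with weight $S$, followed by $r=\lceil\log_2(\sqrt n\log n)\rceil$ singleton control coordinates.
\item The $m$ hyperplanes attached to $B_1$ have right-hand sides $1,\dots,m$, one each, as in your skeleton.
\item The later hyperplanes are simply $C_i: x_i+S=1$.
\end{itemize}
Balancing is done among the $B_1$-hyperplanes themselves, not by offloading to later blocks. The main hyperplane $S-s\sum_i 2^{i-1}(1-2b_i)x_i=a+s(1+[\mathbf{b}])$ selects level $a+s(1+[\mathbf{b}\oplus\mathbf{y}])$ on the slice $\mathbf{y}$. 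By the XOR bijection it takes each level $a+hs$, $1\le h\le 2^r$, on exactly one slice. Its size is therefore the residue-class sum $\sum_{h=1}^{2^r}\binom{m}{a+hs}$, and these planes partition the levels $s,\dots,(2^r+1)s-1$ on every slice.

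Ramus's roots-of-unity estimate bounds each such sum by $(1+o(1))2^m/s$ uniformly, provided $m/s^2\to\infty$; this is why one needs $2^r\gg\sqrt n$. Since $2^rs=(1-o(1))n$, this equals $(1+o(1))2^n/n$. The remaining $s-1+t+r=o(n)$ layers and cleanup planes have size $2^{o(n)}$. Arithmetic-progression coefficients that permute levels across slices, combined with equidistribution of binomial coefficients in residue classes, are the ingredients your plan lacks.
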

	
	The Alon--F\"uredi theorem was proved using polynomial methods and was one of the forerunners  of the  Combinatorial Nullstellensatz of Alon, whose applications include additive number theory, graph coloring, graph theory, and many other topics in  combinatorics \cite{Alon1999}.
	
	The Combinatorial Nullstellensatz has proved particularly effective in extremal problems where a sharp numerical bound is accompanied by a strong inverse theory. A fundamental example is the Cauchy--Davenport theorem: for nonempty sets $A,B\subseteq  \F_p$, one has
	$|A+B|\geq \min{p,|A|+|B|-1},$
	and the bound admits a short proof by the Combinatorial Nullstellensatz \cite{Alon1999}. Moreover, in the nonsaturated equality case, Vosper's theorem shows that the extremal pairs are essentially forced: if $|A|,|B|\geq 2$ and
	$
	|A+B|=|A|+|B|-1\leq p-2,$
	then $A$ and $B$ are arithmetic progressions with a common difference \cite{Vosper1956}. Related inverse results describe pairs whose sumsets are only slightly larger than the minimum \cite{HamidouneRodseth2000}.
	A similar phenomenon occurs for the Erdős–Ginzburg–Ziv theorem,
	which gives the sharp bound on the length forcing a zero-sum subsequence of prescribed size \cite{ErdosGinzburgZiv1961,GaoGeroldinger2006}.
	Its extremal sequences admit a complete inverse description: Bialostocki and Dierker proved that every sequence of $2n-2$ elements of the cyclic group $C_
	n$ containing no $n$-term zero-sum consists of $n-1$ copies of each of two elements whose difference generates $C_n$ \cite{BialostockiDierker1992}.
	
	The situation for the Alon--Füredi theorem is markedly different. Although the lower bound of $n$ hyperplanes is sharp, the equality condition does not appear to determine the geometry of the covering hyperplanes: even the coordinate and Hamming-layer covers have very different intersection and overlap profiles.
	Thus, no comparable inverse theorem  was known for equality in the Alon--F\"uredi covering theorem.  
	The aim of this paper is to determine, with the sharp asymptotic constant, the smallest possible largest Boolean intersection among all Alon–Füredi equality covers, thereby showing that the  covers may have radically different intersection profiles in the extremal case of $n$ hyperplanes.
	
	Our construction is completely explicit and uses only integer coefficients. At a high level, we write $n=r+m$, distinguish $r$ \textit{control coordinates},  and   for each
	$\textbf{y}\in\{0,1\}^r$, we call ${\textbf{y}}\times\{0,1\}^m$
	the corresponding control slice. Let
	\[
	S=x_{r+1}+\cdots+x_n
	\]
	be the Hamming weight of the remaining $m$ coordinates. A hyperplane of the form
	\[
	a_1x_1+\cdots+a_rx_r+S=d
	\]
	selects one Hamming layer on each of the $2^r$ control slices. We choose the coefficients so that these layer indices form an arithmetic progression as the control slice varies. For a fixed hyperplane, its Boolean intersection size is then a sum of binomial coefficients in one residue class. Choosing $2^r\asymp\sqrt n\log n$ makes these residue classes asymptotically uniform and forces every main intersection size to be $(1+o(1))2^n/n$.
	
	The hyperplane-cover literature has developed in several related directions. Covers avoiding more than one prescribed cube vertex were studied by Aaronson et al.~\cite{AaronsonEtAl2021}. Higher-multiplicity almost covers were investigated by Clifton and Huang~\cite{CliftonHuang2020}, using the punctured Combinatorial Nullstellensatz of Ball and Serra~\cite{BallSerra2009}; multiplicity and layer variants were developed further by Ghosh, Kayal, and Nandi~\cite{GhoshKayalNandi2023}, and a stability theory for the relevant weighted inequality was obtained by Das et al.~\cite{DasEtAl2023}. Covers of symmetric subsets and low-weight complements were considered by Venkitesh~\cite{Venkitesh2022} and Sziklai and Weiner~\cite{SziklaiWeiner2023}. Essential and nondegenerate covers impose different geometric conditions on the coefficient supports \cite{LinialRadhakrishnan2005,SauermannXu2025,SauermannXu2026}. The parameter $F(n)$ concerns a distinct extremal feature: the number of hyperplanes is fixed at the Alon--F\"uredi minimum, while the objective is to minimize the largest Boolean intersection size.
	
	The Alon--Füredi almost-cover problem has also been studied for vertex sets of other polytopes. Hegedüs and Károlyi proved that every almost cover of the vertices of the permutohedron $\Pi_{n-1}$ contains at least $\binom{n}{2}$ affine hyperplanes, and that this bound is sharp \cite{HegedusKarolyi2024}, see also \cite{Karolyi2024}. Their polynomial argument closely parallels the proof of the Alon--Füredi theorem. They also considered covers of the entire vertex set by hyperplanes distinct from its affine hull, giving economical constructions whose intersections with the vertex set are pairwise disjoint and form exact partitions. This is particularly relevant to the present problem: it shows that, beyond the Boolean cube, the number of covering hyperplanes, the sizes of their individual intersections, and the overlap pattern of the cover arise naturally as related but distinct extremal parameters.
	
	
	The paper is organized as follows. In \Cref{sec:intersection-formula} we record a general coefficient formula for the Boolean intersection size of a hyperplane. In \Cref{sec:construction} we give an exact construction for arbitrary parameters and compute all intersection sizes. In \Cref{sec:residues} we establish the required binomial residue estimates and complete the proof of \Cref{thm:main}. Some remarks on the problem and the case of small dimensions are discussed in \Cref{sec:concluding}.
	
	\section{Boolean intersection sizes from the coefficients}\label{sec:intersection-formula}
	
	The following elementary lemma determines the Boolean intersection size of a hyperplane directly from its coefficients. It will also justify all subsequent counting formulas.
	
	\begin{lemma}[Coefficient formula]\label{lem:coefficient-formula}
		Let
		\[
		H(a,b)=\left\{x\in\R^n:\sum_{i=1}^n a_ix_i=b\right\},
		\qquad a=(a_1,\dots,a_n)\ne 0.
		\]
		Then
		\begin{equation}\label{eq:subset-sum-formula}
			\isect_n(H(a,b))
			=\sum_{J\subseteq[n]}\1\!\left\{\sum_{j\in J}a_j=b\right\}.
		\end{equation}
		If the distinct coefficient values are $c_1,\dots,c_u$, with respective multiplicities $m_1,\dots,m_u$, then equivalently
		\begin{equation}\label{eq:grouped-formula}
			\isect_n(H(a,b))
			=\sum_{\substack{0\le k_j\le m_j\ (1\le j\le u)\\
					c_1k_1+\cdots+c_uk_u=b}}
			\prod_{j=1}^u\binom{m_j}{k_j}.
		\end{equation}
		When $a_1,\dots,a_n,b\in\Z$, this is the Laurent-polynomial coefficient
		\begin{equation}\label{eq:generating-function-formula}
			\isect_n(H(a,b))=[z^b]\prod_{i=1}^n(1+z^{a_i}).
		\end{equation}
		The same coefficient formula applies to rational coefficients after clearing denominators.
	\end{lemma}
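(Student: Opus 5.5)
The plan is to identify Boolean vertices with subsets of $[n]$ and read each of the three formulas off that bijection. The map $J\mapsto \1_J$, where $\1_J\in\Q_n$ is the indicator vector of $J\subseteq[n]$, is a bijection from subsets of $[n]$ onto $\Q_n$. A vertex $\1_J$ lies on $H(a,b)$ exactly when $\sum_{i}a_i(\1_J)_i=\sum_{j\in J}a_j=b$. Summing the indicator of this condition over all $J$ counts $|H(a,b)\cap\Q_n|$, which gives \eqref{eq:subset-sum-formula}. The hypothesis $a\neq 0$ only ensures that $H(a,b)$ is a genuine hyperplane; it plays no role in the counting.

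To obtain \eqref{eq:grouped-formula}, partition $[n]$ into the classes $I_j=\{i: a_i=c_j\}$, so that $|I_j|=m_j$. Every $J\subseteq[n]$ corresponds bijectively to the tuple $(J\cap I_1,\dots,J\cap I_u)$, and its subset sum equals $\sum_j c_j|J\cap I_j|$. We group the subsets $J$ according to the vector $(k_1,\dots,k_u)=(|J\cap I_1|,\dots,|J\cap I_u|)$. For a fixed vector with $0\le k_j\le m_j$, there are $\prod_j\binom{m_j}{k_j}$ subsets $J$ realizing it, and the condition $\sum_{j\in J}a_j=b$ becomes $\sum_j c_jk_j=b$. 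Summing over the admissible vectors yields \eqref{eq:grouped-formula}.

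For \eqref{eq:generating-function-formula}, assume the coefficients are integers. Expanding the product $\prod_{i=1}^n(1+z^{a_i})$ in the Laurent polynomial ring $\Z[z,z^{-1}]$ means choosing, for each $i$, either the term $1$ or the term $z^{a_i}$. The choices are indexed by $J$ (the set of $i$ where $z^{a_i}$ is chosen), and the choice $J$ contributes the monomial $z^{\sum_{j\in J}a_j}$. Hence the coefficient of $z^b$ counts the subsets $J$ with subset sum $b$, which is \eqref{eq:subset-sum-formula} again.

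For rational data, multiply $a$ and $b$ by a common denominator $D>0$. This gives the same hyperplane with integer coefficients, so \eqref{eq:generating-function-formula} applies to $(Da,Db)$. There is no real obstacle anywhere in the argument; the only point needing care is to work in $\Z[z,z^{-1}]$ so that negative coefficients $a_i$ are allowed.
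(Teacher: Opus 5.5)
Your proposal is correct and follows the paper's own proof: you use the bijection between subsets and indicator vectors, group subsets by how many indices they take from each coefficient class, and expand $\prod_i(1+z^{a_i})$ term by term. Your added remarks on working in $\Z[z,z^{-1}]$ to allow negative $a_i$ and on clearing denominators simply spell out what the paper states briefly.
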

	
	\begin{proof}
		Each Boolean vector $x\in\Q_n$ is the indicator vector of a unique subset $J\subseteq[n]$, and the hyperplane equation becomes $\sum_{j\in J}a_j=b$. This proves \eqref{eq:subset-sum-formula}. Grouping subsets according to how many indices are chosen from each coefficient class gives \eqref{eq:grouped-formula}. Finally, expanding $\prod_i(1+z^{a_i})$ chooses either $1$ or $z^{a_i}$ from each factor, so the exponent records the corresponding subset sum, proving \eqref{eq:generating-function-formula}.
	\end{proof}
	
	\begin{corollary}\label{cor:control-tail}
		Let $n=r+m$, write $\textbf{x}=(\textbf{y,z})\in\Q_r\times\Q_m$, and put $S(\textbf{z})=z_1+\cdots+z_m$. For
		\[
		H=\left\{(y,z):\alpha_1y_1+\cdots+\alpha_ry_r+S(z)=d\right\},
		\]
		one has
		\begin{equation}\label{eq:control-tail-formula}
			\isect_n(H)=\sum_{\textbf{y}\in\Q_r}\binom{m}{d-\alpha\cdot y},
		\end{equation}
		where $\binom{m}{k}=0$ when $k\notin\{0,\dots,m\}$. Here we adopt the convention that
		\(\binom{m}{u}=0\) unless \(u\in\{0,1,\ldots,m\}\).
	\end{corollary}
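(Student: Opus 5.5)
The plan is to deduce the corollary directly from the subset-sum formula \eqref{eq:subset-sum-formula} of \Cref{lem:coefficient-formula}, regrouping the Boolean points of $H$ according to their control part $\textbf{y}$. Equivalently, one can apply the grouped formula \eqref{eq:grouped-formula}, treating the $m$ tail coordinates as a single coefficient class of value $1$ and multiplicity $m$.

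Concretely, we partition $\Q_n=\Q_r\times\Q_m$ into the $2^r$ control slices $\{\textbf{y}\}\times\Q_m$. On a fixed slice, the equation of $H$ reads $S(\textbf{z})=d-\alpha\cdot \textbf{y}$. The slice intersection $H\cap(\{\textbf{y}\}\times\Q_m)$ is therefore in bijection with the set of $\textbf{z}\in\Q_m$ of Hamming weight exactly $d-\alpha\cdot\textbf{y}$. That set has $\binom{m}{d-\alpha\cdot \textbf{y}}$ elements when $d-\alpha\cdot\textbf{y}\in\{0,\dots,m\}$, and is empty otherwise. The empty case covers non-integer values of $d-\alpha\cdot\textbf{y}$ as well, which can occur if the $\alpha_i$ or $d$ are not integers. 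This is exactly why the stated convention $\binom{m}{u}=0$ for $u\notin\{0,\dots,m\}$ is needed, and it must be read as including non-integral $u$. Summing over the disjoint slices gives \eqref{eq:control-tail-formula}.

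In the language of \Cref{lem:coefficient-formula}, this amounts to writing a subset $J\subseteq[n]$ as $J_1\cup J_2$ with $J_1\subseteq[r]$ (encoding $\textbf{y}$) and $J_2\subseteq\{r+1,\dots,n\}$. For fixed $J_1$, the number of admissible $J_2$ is the number of subsets of an $m$-set of size $d-\sum_{j\in J_1}\alpha_j$.

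There is no real obstacle. The only point requiring care is the boundary convention for the binomial coefficient, including non-integer arguments. One should also note that the coefficient vector is nonzero (the coefficients of $\textbf{z}$ are $1$) whenever $m\ge1$, so $H$ is a genuine hyperplane and the lemma applies.
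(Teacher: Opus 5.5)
Your proof is correct and is essentially the paper's argument: fix the control vector $\textbf{y}$, note that the equation on that slice becomes $S(\textbf{z})=d-\alpha\cdot\textbf{y}$ with $\binom{m}{d-\alpha\cdot\textbf{y}}$ Boolean solutions, and sum over the disjoint slices. Your extra remarks on non-integral arguments and on routing through \Cref{lem:coefficient-formula} are harmless but not needed, since the direct slice count is complete on its own.
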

	
	\begin{proof}
		For each fixed control vector $\textbf{y}$, the equation is $S(z)=d-\alpha\cdot y$, which has $\binom{m}{d-\alpha\cdot y}$ Boolean solutions.
	\end{proof}
	
	\section{The control-slice construction}\label{sec:construction}
	
	Fix integers $n$ and $r$ with $1\le r<n$, and let $m=n-r$.
	Assume $2^r+r\le n$, so that the integer $s$ defined by $s:= \lfloor\frac{m+1}{2^r+1}\rfloor$ is positive.  Let $t$ be the corresponding remainder, i.e., $m+1=(2^r+1)s+t,\qquad 0\le t<2^r+1.$ 
	Thus 
	\[
	s=\left\lfloor\frac{m+1}{2^r+1}\right\rfloor.
	\]
	We write a  point $\textbf{x}$ of $\Q_n$ as $(\textbf{y},\textbf{z})\in\Q_r\times\Q_m$, where
	\[
	\textbf{y}=(x_1,\dots,x_r),\qquad \textbf{z}=(x_{r+1},\dots,x_n),
	\]
	and set
	\[
	S:=S(z)=x_{r+1}+\cdots+x_n.
	\]
	For $\textbf{b}=(b_1,\dots,b_r)\in\Q_r$, define its \textit{binary value} by
	\[
	[\textbf{b}]=\sum_{i=1}^r2^{i-1}b_i.
	\]
	We use $\textbf{b}\oplus \textbf{y}$ for coordinate-wise addition modulo two. The identity
	\begin{equation}\label{eq:xor}
		[\textbf{b}\oplus \textbf{y}]=[\textbf{b}]+\sum_{i=1}^r2^{i-1}(1-2b_i)y_i
	\end{equation}
	encodes the permutation of Hamming levels across the control slices.
	
	We now define four types of hyperplanes.
	\begin{definition}[Covering hyperplanes] \
		
		\noindent\textbf{I. Main hyperplanes.}
		For $0\le a<s$ and $\textbf{b}\in\Q_r$, let
		\begin{equation}\label{eq:main-hyperplane}
			H_{a,\textbf{b}}:\quad
			S-s\sum_{i=1}^r2^{i-1}(1-2b_i)x_i
			=a+s(1+[\textbf{b}]).
		\end{equation}

		\medskip
		\noindent\textbf{II. Lower Hamming layers.}
		For $1\le j<s$, let
		\begin{equation}\label{eq:lower-layer}
			L_j:\quad S=j.
		\end{equation}

		\medskip
		\noindent\textbf{III. Upper remainder layers.}
		For $0\le\ell<t$, let
		\begin{equation}\label{eq:upper-layer}
			U_\ell:\quad S=(2^r+1)s+\ell.
		\end{equation}

		\medskip
		\noindent\textbf{IV. Cleanup hyperplanes.}
		For $1\le i\le r$, let
		\begin{equation}\label{eq:cleanup}
			C_i:\quad x_i+S=1.
		\end{equation}
		
	\end{definition}
	\medskip

	\begin{theorem}[Exact construction]\label{thm:exact-construction}

		Let
		\begin{align*}
			\mathcal H(n,r)
			={}&\{H_{a,\textbf{b}}:0\le a<s,\ \textbf{b}\in\Q_r\}
			\,\cup\,\{L_j:1\le j<s\}\\
			&\cup\,\{U_\ell:0\le\ell<t\}
			\,\cup\,\{C_i:1\le i\le r\}.
		\end{align*}
		The family $\mathcal H(n,r)$ consists of exactly $n$ affine hyperplanes, covers $\Q_n^*$, and avoids the origin.
	\end{theorem}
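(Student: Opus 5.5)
The proof checks three things in turn: the count, avoidance of the origin, and the cover.

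\emph{Counting.} There are $s2^r$ main hyperplanes, $s-1$ lower layers, $t$ upper layers and $r$ cleanup hyperplanes, for a total of
\[
s2^r+s-1+t+r=(2^r+1)s+t-1+r=m+r=n .
\]
We also check that the hyperplanes are distinct genuine affine hyperplanes. Each has coefficient $1$ on every $z$-coordinate, so none is degenerate. They are pairwise distinct because their $y$-coefficient vectors or right-hand sides differ. In particular, $H_{a,\textbf{b}}$ has $y$-coefficients $-s2^{i-1}(1-2b_i)\neq 0$, which differ from those of $L_j$, $U_\ell$ and $C_i$.

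\emph{Avoiding $\textbf{0}$.} At the origin every left-hand side is $0$. The right-hand sides are nonzero: $a+s(1+[\textbf{b}])\ge s\ge1$, $j\ge1$, $(2^r+1)s+\ell\ge s\ge 1$, and $1$ for the $C_i$.

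\emph{Covering.} Fix $(\textbf{y},\textbf{z})\neq\textbf{0}$ and write $k=S(\textbf{z})\in\{0,\dots,m\}$. We split by the value of $k$.
\begin{itemize}
\item If $k=0$, then $\textbf{y}\ne\textbf{0}$, so $y_i=1$ for some $i$, and the point lies on $C_i$.
\item If $1\le k<s$, the point lies on $L_k$.
\item If $k\ge(2^r+1)s$, then $k-(2^r+1)s=\ell$ satisfies $0\le\ell\le m-(2^r+1)s=t-1$, so the point lies on $U_\ell$.
\end{itemize}
The main case is $s\le k<(2^r+1)s$. Write $k=s(1+c)+a$ with $0\le a<s$ and $0\le c<2^r$; this representation is unique. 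The key step is to find $\textbf{b}\in\Q_r$ with $(\textbf{y},\textbf{z})\in H_{a,\textbf{b}}$. By \eqref{eq:xor}, the equation of $H_{a,\textbf{b}}$ evaluated at this point reads
\[
k - s\bigl([\textbf{b}\oplus\textbf{y}]-[\textbf{b}]\bigr)=a+s+s[\textbf{b}],
\]
which simplifies to $k=a+s(1+[\textbf{b}\oplus\textbf{y}])$. So we need $[\textbf{b}\oplus\textbf{y}]=c$. Since $c\in\{0,\dots,2^r-1\}$ is the binary value of a unique vector $\textbf{c}\in\Q_r$, we take $\textbf{b}=\textbf{c}\oplus\textbf{y}$.

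The only step needing care is this main case, specifically the use of identity \eqref{eq:xor} to turn the affine equation into the condition $[\textbf{b}\oplus\textbf{y}]=c$. That identity follows coordinatewise from $b\oplus y=b+(1-2b)y$ for $b,y\in\{0,1\}$. Everything else is bookkeeping with $m+1=(2^r+1)s+t$.
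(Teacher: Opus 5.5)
Your proof is correct and follows the paper's argument: the same count via $m+1=(2^r+1)s+t$, the same use of \eqref{eq:xor} to reduce $H_{a,\textbf{b}}$ on the slice $\{\textbf{y}\}\times\Q_m$ to $S=a+s(1+[\textbf{b}\oplus\textbf{y}])$, and the same split into lower, main, upper and zero levels; the only difference is that you solve directly for $\textbf{b}=\textbf{c}\oplus\textbf{y}$, whereas the paper lets $\textbf{b}$ range over $\Q_r$. One small inaccuracy in your extra distinctness check: when $r=s=1$ (e.g.\ $n=3$), $H_{0,(1)}$ is $x_1+S=2$, which has the same coefficients as $C_1$, so distinctness there comes from the right-hand sides rather than the $y$-coefficients.
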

	
	\begin{proof}
		The number of hyperplanes is
		\[
		(2^r+1)s-1+t+r=m+r=n.
		\]
		
		Fix a control vector $\textbf{y}\in\Q_r$. On the slice $\{\textbf{y}\}\times\Q_m$, the main hyperplane \eqref{eq:main-hyperplane} becomes, by \eqref{eq:xor},
		\begin{equation}\label{eq:selected-level}
			S=a+s(1+[\textbf{b}\oplus \textbf{y}]).
		\end{equation}
		As $b$ ranges over $\Q_r$, the vector $\textbf{b}\oplus \textbf{y}$ also ranges over $\Q_r$. Consequently, for fixed $a$, the main hyperplanes select exactly the levels
		\[
		a+s,a+2s,\dots,a+2^rs.
		\]
		As \(a\) ranges from \(0\) to \(s-1\), these arithmetic progressions
		together contain every integer level in
		\(\{s,s+1,\ldots,(2^r+1)s-1\}\) exactly once. The lower layers cover $1,\dots,s-1$, and the upper layers cover
		\[
		(2^r+1)s,(2^r+1)s+1,\dots,(2^r+1)s+t-1=m.
		\]
		Thus, on every control slice, every positive Hamming-weight level $S=1,\dots,m$ is covered.
		
		It remains to cover level $S=0$ on the nonzero control slices. If $\textbf{y}\ne \textbf{0}$, then some $y_i=1$, and the corresponding cleanup hyperplane $C_i$ contains $(\textbf{y},\textbf{z})$ when $S(\textbf{z})=0$. At the origin, all main right-hand sides are positive, all lower and upper layers have positive right-hand side, and every cleanup equation reads $0=1$. Hence the origin is avoided.
	\end{proof}
	
	The proof shows a little more: apart from the cleanup hyperplanes, the construction partitions all vertices whose restriction to the last $m=n-r$ coordinates has positive Hamming weight.
	
	In order to prove the main theorem, we must determine the intersection sizes of the hyperplanes.
	
	\begin{proposition}[Intersection-size formulas]\label{prop:intersection-formulas}
		For the family $\mathcal H(n,r)$, the Boolean intersection sizes are as follows:
		\begin{align}
			\isect_n(H_{a,\textbf{b}})
			&=\sum_{h=1}^{2^r}\binom{m}{a+hs},
			&&0\le a<s,\ \textbf{b}\in\Q_r,\label{eq:main-size}\\
			\isect_n(L_j)
			&=2^r\binom{m}{j},
			&&1\le j<s,\label{eq:lower-size}\\
			\isect_n(U_\ell)
			&=2^r\binom{m}{(2^r+1)s+\ell},
			&&0\le\ell<t,\label{eq:upper-size}\\
			\isect_n(C_i)
			&=2^{r-1}(m+1),
			&&1\le i\le r.\label{eq:cleanup-size}
		\end{align}
		In particular,
		\begin{multline}\label{eq:general-upper}
			F(n)\le \max\Biggl\{
			\max_{0\le a<s}\sum_{h=1}^{2^r}\binom{m}{a+hs},\ 
			2^r\max_{1\le j<s}\binom{m}{j},
			2^r\max_{0\le\ell<t}\binom{m}{(2^r+1)s+\ell},\ 
			2^{r-1}(m+1)
			\Biggr\},
		\end{multline}
		where a maximum over an empty index set is omitted.
	\end{proposition}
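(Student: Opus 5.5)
We obtain each formula by applying \Cref{cor:control-tail}, or more directly by slicing along the control coordinates. We fix a control vector $\textbf{y}\in\Q_r$ and count the Boolean solutions $\textbf{z}\in\Q_m$ on the slice $\{\textbf{y}\}\times\Q_m$. Summing over the $2^r$ slices then gives $\isect_n$. This is legitimate because every hyperplane in $\mathcal H(n,r)$ has coefficient $1$ on each of $x_{r+1},\dots,x_n$. Its restriction to a slice is therefore a single Hamming-layer condition $S=k(\textbf{y})$, which has $\binom{m}{k(\textbf{y})}$ solutions under the stated convention.

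For the main hyperplanes we use \eqref{eq:selected-level} from the proof of \Cref{thm:exact-construction}. On the slice indexed by $\textbf{y}$, $H_{a,\textbf{b}}$ becomes $S=a+s(1+[\textbf{b}\oplus\textbf{y}])$. The map $\textbf{y}\mapsto\textbf{b}\oplus\textbf{y}$ is a bijection of $\Q_r$, and $[\cdot]$ is a bijection $\Q_r\to\{0,\dots,2^r-1\}$. Hence the quantity $h=1+[\textbf{b}\oplus\textbf{y}]$ runs through $\{1,\dots,2^r\}$ exactly once, and the sum in \eqref{eq:control-tail-formula} reindexes to \eqref{eq:main-size}. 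In particular the size does not depend on $\textbf{b}$.

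The layers $L_j$ and $U_\ell$ do not involve $\textbf{y}$. Each slice contributes the same binomial coefficient, which gives \eqref{eq:lower-size} and \eqref{eq:upper-size}. For $C_i$ we split by the value of $y_i$. The $2^{r-1}$ slices with $y_i=0$ require $S=1$ and contribute $m$ each. The $2^{r-1}$ slices with $y_i=1$ require $S=0$ and contribute $1$ each. The total is $2^{r-1}(m+1)$.

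Finally, \eqref{eq:general-upper} follows from \Cref{def:F}, since $\mathcal H(n,r)$ is an admissible family by \Cref{thm:exact-construction}. Its largest intersection is the maximum of the four computed quantities, with empty index ranges (for example $s=1$ or $t=0$) simply dropped. The only step needing care is the reindexing for $H_{a,\textbf{b}}$, namely checking the bijection and the identity \eqref{eq:xor}. Everything else is routine bookkeeping.
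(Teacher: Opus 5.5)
Your proposal is correct and follows essentially the same route as the paper: you slice along the control coordinates, use the bijection $\textbf{y}\mapsto\textbf{b}\oplus\textbf{y}$ (via \eqref{eq:xor}) to reindex the main-hyperplane levels as $a+hs$ for $1\le h\le 2^r$, and split $C_i$ according to the value of $x_i$. Your explicit remark that \eqref{eq:general-upper} follows from the admissibility of $\mathcal H(n,r)$ given by \Cref{thm:exact-construction} spells out a step the paper leaves implicit.
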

	
	\begin{proof}
		For a fixed pair $(a,\textbf{b})$, the map $\textbf{y}\mapsto \textbf{b}\oplus \textbf{y}$ is a bijection of $\Q_r$. Thus \eqref{eq:selected-level} assumes the levels $a+hs$, $1\le h\le 2^r$, once each. Applying Corollary \ref{cor:control-tail} proves \eqref{eq:main-size}. The equations defining $L_j$ and $U_\ell$ are independent of the $2^r$ control vectors, giving \eqref{eq:lower-size} and \eqref{eq:upper-size}.
		
		For $C_i$, if $x_i=1$ then $S=0$, giving $2^{r-1}$ points. If $x_i=0$ then $S=1$, giving $2^{r-1}m$ points. This proves \eqref{eq:cleanup-size}.
	\end{proof}
	
	\begin{remark}[Total incidence]\label{rem:total-incidence}
		The main, lower, and upper hyperplanes cover every vertex with $S\ge1$ exactly once. Therefore
		\[
		\sum_{H\in\mathcal H(n,r)}|H\cap\Q_n|
		=2^n-2^r+r2^{r-1}(m+1).
		\]
		In the asymptotic regime used below, the correction to $2^n$ is subexponential. Thus not only the maximum intersection size, but also the total incidence, is asymptotically minimal.
	\end{remark}
	\subsection{Bounding the intersection sizes and setting the value of $r$}\label{sec:residues}

	The main intersection size in \eqref{eq:main-size} is controlled by sums of
	binomial coefficients over residue classes. For integers \(m,s\geq 1\) and
	\(a\in\mathbb Z\), define
	\[
	R_{m,s}(a)
	=
	\sum_{\substack{0\leq k\leq m\\ k\equiv a\pmod s}}
	\binom{m}{k}.
	\]
	The roots-of-unity identity used below is classical and is usually attributed
	to Ramus~\cite{Ramus1834}; see also~\cite{KonvalinaLiu1997}. We include a
	short proof in the Appendix because we require an estimate that is uniform when the modulus
	\(s\) grows with \(m\).
	
	\begin{lemma}[Ramus's identity and a uniform estimate]
		\label{lem:roots-unity}
		Let \(\omega=e^{2\pi i/s}\). Then
		\begin{equation}
			\label{eq:ramus-identity}
			R_{m,s}(a)
			=
			\frac{1}{s}
			\sum_{j=0}^{s-1}
			\omega^{-aj}(1+\omega^j)^m.
		\end{equation}
		Moreover,
		\begin{equation}
			\label{eq:fourier-error}
			\left|
			R_{m,s}(a)-\frac{2^m}{s}
			\right|
			\leq
			\frac{2^{m+1}}{s}
			\sum_{j=1}^{\lfloor s/2\rfloor}
			\exp\left(-\frac{2mj^2}{s^2}\right).
		\end{equation}
		Consequently, uniformly in \(a\),
		\begin{equation}
			\label{eq:roots-bound}
			R_{m,s}(a)
			\leq
			\frac{2^m}{s}
			+
			C\frac{2^m}{\sqrt m}
		\end{equation}
		for an absolute constant \(C\). Furthermore, if 
		$\frac{m}{s^2}\longrightarrow\infty,$
		then, uniformly in \(a\),
		\begin{equation}
			\label{eq:roots-asymptotic}
			R_{m,s}(a)
			=
			\left(1+o(1)\right)\frac{2^m}{s}.
		\end{equation}
	\end{lemma}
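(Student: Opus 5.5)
The identity \eqref{eq:ramus-identity} is standard discrete Fourier inversion. Expand $(1+\omega^j)^m=\sum_{k=0}^m\binom{m}{k}\omega^{jk}$ and interchange the sums. This gives
\[
\frac1s\sum_{j=0}^{s-1}\omega^{-aj}(1+\omega^j)^m=\sum_{k=0}^m\binom mk\cdot\frac1s\sum_{j=0}^{s-1}\omega^{j(k-a)}.
\]
The inner average equals $1$ if $k\equiv a\pmod s$ and $0$ otherwise, since it is a geometric sum of a nontrivial $s$-th root of unity. This yields $R_{m,s}(a)$.

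For \eqref{eq:fourier-error}, separate the $j=0$ term, which is exactly $2^m/s$. Bound each remaining term in absolute value by $|1+\omega^j|^m=(2|\cos(\pi j/s)|)^m$. The terms $j$ and $s-j$ have the same modulus, so the sum over $1\le j\le s-1$ is at most twice the sum over $1\le j\le\lfloor s/2\rfloor$. Then
\[
\Bigl|R_{m,s}(a)-\frac{2^m}{s}\Bigr|\le\frac{2^{m+1}}{s}\sum_{j=1}^{\lfloor s/2\rfloor}\cos^m(\pi j/s).
\]
It remains to show $\cos x\le e^{-2x^2/\pi^2}$ for $0\le x\le\pi/2$; with $x=\pi j/s$ this gives $\cos^m(\pi j/s)\le\exp(-2mj^2/s^2)$. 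This elementary inequality is the only slightly delicate point. I would prove it via concavity: $\cos x\le 1-\frac{2x}{\pi}\cdot\frac{x}{\pi}\cdot 2$ is awkward, so instead I would compare logarithms. Put $g(x)=\log\cos x+2x^2/\pi^2$. Then $g(0)=0$ and $g'(x)=-\tan x+4x/\pi^2$, which is $\le0$ since $\tan x\ge x\ge 4x/\pi^2$. Hence $g\le0$ on $[0,\pi/2)$, and the endpoint $x=\pi/2$ is trivial.

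For \eqref{eq:roots-bound}, compare the Gaussian sum with an integral. Since $\exp(-2mj^2/s^2)$ is decreasing in $j$,
\[
\sum_{j\ge1}e^{-2mj^2/s^2}\le\int_0^\infty e^{-2mu^2/s^2}\,du=\frac{s}{2}\sqrt{\frac{\pi}{2m}}.
\]
Multiplying by $2^{m+1}/s$ gives an error at most $\sqrt{\pi/2}\,2^m/\sqrt m$, so $C=\sqrt{\pi/2}$ works, uniformly in $a$ and $s$.

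Finally, for \eqref{eq:roots-asymptotic}, divide the error in \eqref{eq:fourier-error} by $2^m/s$. The relative error is at most $2\sum_{j\ge1}q^{j^2}\le 2\sum_{j\ge1}q^j=2q/(1-q)$, where $q=e^{-2m/s^2}$. This tends to $0$ when $m/s^2\to\infty$, uniformly in $a$. (The bound \eqref{eq:roots-bound} alone would only give relative error $O(s/\sqrt m)$, which also suffices under the same hypothesis.) The main obstacle is the cosine–Gaussian comparison; everything else is bookkeeping.
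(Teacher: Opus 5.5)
Your proposal is correct and follows the paper's proof essentially step for step: the roots-of-unity filter for the identity, $|1+\omega^j|=2|\cos(\pi j/s)|$ with the $j\leftrightarrow s-j$ symmetry, the bound $\cos x\le e^{-2x^2/\pi^2}$, the integral comparison giving $C=\sqrt{\pi/2}$, and domination of $\sum_j q^{j^2}$ by a geometric series. The one addition is that you actually justify the cosine inequality via $g'(x)=-\tan x+4x/\pi^2\le 0$, which the paper only states.
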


	
	
	We now choose the control dimension as a slowly growing function of $n$.
	
	\begin{proof}[Proof of \Cref{thm:main}]
		The lower bound follows from \eqref{eq:averaging-lb}, so it remains to prove the upper bound. Choose
		\begin{equation}\label{eq:r-choice}
			r=\left\lceil\log_2(\sqrt n\log n)\right\rceil,
		\end{equation}
		put $m=n-r$, and define $s,t$ as before:  $m+1=(2^r+1)s+t,\qquad 0\le t<2^r+1.$ For all sufficiently large $n$, we have \(2^r\leq m\), and hence \(s\geq1\). The choice \eqref{eq:r-choice} gives
		\begin{equation}\label{eq:p-range}
			\sqrt n\log n\le 2^r<2\sqrt n\log n.
		\end{equation}
		Since $t<2^r+1$,
		\[
		s=\frac{m+1-t}{2^r+1}\sim\frac{n}{2^r},
		\]
		and therefore
		\begin{equation}\label{eq:mixing}
			\frac{m}{s^2}\sim\frac{2^{2r}}{n}\ge(1+o(1))(\log n)^2\longrightarrow\infty.
		\end{equation}
		
		For a main hyperplane, \eqref{eq:main-size} is a partial sum of one residue class modulo $s$. By Lemma \ref{lem:roots-unity} and \eqref{eq:mixing}, uniformly in the pair $a,\textbf{b}$, we obtain 
		\begin{equation}\label{eq:main-asymptotic-1}
			\isect_n(H_{a,\textbf{b}})\le(1+o(1))\frac{2^m}{s}.
		\end{equation}
		Thus we have
		\[
		2^rs=m+1-s-t=n-r+1-s-t.
		\]
		By \eqref{eq:p-range},
		\[
		r=O(\log n),\qquad
		s=O\!\left(\frac{\sqrt n}{\log n}\right),\qquad
		t=O(\sqrt n\log n),
		\]
		so $2^rs=(1-o(1))n$. Consequently,
		\begin{equation}\label{eq:main-asymptotic-2}
			\frac{2^m}{s}=\frac{2^n}{2^rs}=(1+o(1))\frac{2^n}{n}.
		\end{equation}
		Together, \eqref{eq:main-asymptotic-1} and \eqref{eq:main-asymptotic-2} give the desired bound for the main hyperplanes.
		
		It remains to check that all auxiliary hyperplanes are negligible. For a lower layer, $j<s=O(\sqrt n/\log n)$, and the standard estimate
		\[
		\binom{m}{j}\le\left(\frac{em}{j}\right)^j
		\]
		shows
		\[
		2^r\binom{m}{j}=2^{o(n)}=o\!\left(\frac{2^n}{n}\right).
		\]
		For an upper layer, the complementary weight satisfies
		\[
		m-((2^r+1)s+\ell)=t-1-\ell<2^r+1=O(\sqrt n\log n).
		\]
		Hence the same estimate, applied after binomial symmetry, gives
		\[
		2^r\binom{m}{(2^r+1)s+\ell}=2^{o(n)}=o\!\left(\frac{2^n}{n}\right).
		\]
		Finally,
		\[
		\isect_n(C_i)=2^{r-1}(m+1)=O(n^{3/2}\log n)
		=o\!\left(\frac{2^n}{n}\right).
		\]
		Applying Proposition \ref{prop:intersection-formulas} completes the upper bound and hence the theorem.
	\end{proof}
	
	The proof allows a range of choices. The following formulation makes the transition from the Hamming scale to the averaging scale transparent.
	
	\begin{corollary}\label{cor:range}
		Suppose $r=r(n)$ satisfies $2^r=o(n)$, and let $s,t$ be as in the definition of $s$. Then the construction gives
		\[
		F(n)\le O\!\left(2^n\left(\frac1n+\frac{1}{2^r\sqrt n}\right)\right),
		\]
		provided the auxiliary lower and upper layers have intersection size $o(2^n/n)$. In particular, if $2^r=n^\alpha$ with $0<\alpha<1/2$, then
		\[
		F(n)=O\!\left(\frac{2^n}{n^{1/2+\alpha}}\right).
		\]
		When $2^r\gg\sqrt n$ and $m/s^2\to\infty$, the construction reaches $O(2^n/n)$.
	\end{corollary}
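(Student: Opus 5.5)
The plan is to apply Proposition~\ref{prop:intersection-formulas} to $\mathcal H(n,r)$ for the given $r=r(n)$ with $2^r=o(n)$, and to bound the four groups of hyperplanes separately. The main hyperplanes carry all the content; by hypothesis the lower and upper layers are $o(2^n/n)$. The cleanup hyperplanes satisfy $\isect_n(C_i)=2^{r-1}(m+1)=O(n^2)$, which is also $o(2^n/n)$.

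For the main hyperplanes, by \eqref{eq:main-size}, $\isect_n(H_{a,\textbf{b}})$ is a partial sum of the residue class of $a$ modulo $s$, so $\isect_n(H_{a,\textbf{b}})\le R_{m,s}(a)$. I invoke the uniform bound \eqref{eq:roots-bound} of Lemma~\ref{lem:roots-unity}:
\[
R_{m,s}(a)\le \frac{2^m}{s}+C\frac{2^m}{\sqrt m}.
\]
Next I convert to $n$. Since $m+1=(2^r+1)s+t$ with $t\le 2^r$, we get $2^rs=m+1-s-t$. Here $t\le 2^r=o(n)$, $s\le (m+1)/2^r=o(n)$ once $r\to\infty$, and $r=O(\log n)$. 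Hence $2^rs=(1-o(1))n$, so $2^m/s=2^n/(2^rs)=O(2^n/n)$. Likewise $2^m/\sqrt m=2^{n-r}/\sqrt{m}=O(2^n/(2^r\sqrt n))$, since $m\sim n$. This yields
\[
F(n)\le O\!\left(2^n\left(\frac1n+\frac1{2^r\sqrt n}\right)\right).
\]
One caveat: if $r$ stays bounded, $s$ is comparable to $n$ and $2^rs\asymp n$ still holds, so the $O$-estimate survives without assuming $r\to\infty$.

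The two special cases follow directly. For $2^r=n^\alpha$ with $0<\alpha<1/2$, the second term $2^n/n^{1/2+\alpha}$ dominates $2^n/n$, giving $F(n)=O(2^n/n^{1/2+\alpha})$. I will verify the auxiliary hypothesis in this regime as in the proof of \Cref{thm:main}. We have $j<s=O(n^{1-\alpha})$, but that alone is not small enough to make $2^r\binom{m}{j}$ negligible, so I will note that the corollary assumes it explicitly rather than derive it. The upper layers are fine because the complementary weight is at most $2^r$.

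When $2^r\gg\sqrt n$, the second term is $o(2^n/n)$, giving $O(2^n/n)$. Under $m/s^2\to\infty$, the sharper \eqref{eq:roots-asymptotic} even gives $(1+o(1))2^n/n$.

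The main obstacle is only bookkeeping: showing $2^rs\asymp n$ uniformly from the definition of $s,t$, and keeping the auxiliary-layer hypothesis separate rather than trying to prove it in the full range.
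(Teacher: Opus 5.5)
Your proposal is correct and follows the paper's proof: bound each main hyperplane by the full residue sum $R_{m,s}(a)$, apply \eqref{eq:roots-bound}, and convert using $2^rs\asymp n$ and $2^m=2^n/2^r$. Your extra bookkeeping (the cleanup hyperplanes, the case of bounded $r$) only fills in details the paper leaves implicit, but one side remark is wrong, though harmless: for $2^r=n^\alpha$ you have $j<s=O(n^{1-\alpha})$, so $\binom{m}{j}\le(em/j)^j$ gives $2^r\binom{m}{j}=2^{O(n^{1-\alpha}\log n)}=o(2^n/n)$, and the lower layers are negligible without invoking the proviso.
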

	
	\begin{proof}
		By \eqref{eq:roots-bound}, a main hyperplane has intersection size at most
		\[
		O\!\left(\frac{2^m}{s}+\frac{2^m}{\sqrt m}\right).
		\]
		Since $s\asymp n/2^r$ and $2^m=2^n/2^r$, the two terms are respectively $O(2^n/n)$ and $O(2^n/(2^r\sqrt n))$.
	\end{proof}

	\section{Concluding remarks}\label{sec:concluding}
	
	Our construction proves that  one can get a Alon--F\"uredi cover with small intersection sizes.   If $\mathcal H=\{H_1,\dots,H_n\}$ avoids the origin and covers $\Q_n^*$, and if $\ell_i$ is an affine linear form vanishing on $H_i$, then
	\[
	P(x)=\prod_{i=1}^n\ell_i(x)
	\]
	has degree $n$, vanishes on $\Q_n^*$, and is nonzero at the origin. The Alon--F\"uredi theorem asserts that the degree cannot be smaller. At equality, however, the polynomial method constrains the product $P$ but does not classify its factorization into affine linear forms. The coordinate cover, the Hamming-layer cover, and the present balanced covers provide very different factorizations with the same Boolean vanishing requirement. The parameter $F(n)$ measures the largest number of Boolean zeros lying on a single affine linear factor.

	The general construction is designed for asymptotics, but the same control-slice viewpoint is effective in small dimensions. The following table compares the averaging lower bound, the explicit upper bounds below, and the Hamming-layer upper bound. The explicit upper bounds are achieved using computer-aided search.
	
	\begin{table}[ht]
		\centering
		\begin{tabular}{@{}cccc@{}}
			\toprule
			$n$ & $\left\lceil(2^n-1)/n\right\rceil$ & explicit upper bound & Hamming bound\\
			\midrule
			2& 2&2&2\\
			3&3&3&3\\
			4&4&4&6\\
			5&7&7&10\\
			
			6 & 11 & 11 & 20\\
			7 & 19 & 21 & 35\\
			8 & 32 & 35 & 70\\
			9 & 57 & 71 & 126\\
			\bottomrule
		\end{tabular}
		\caption{Small-dimensional bounds for $F(n)$.}
	\end{table}
	
	\noindent \textbf{AI declaration.} The author found all the main ideas of the proof of the main theorem, however the general construction was inspired by the optimal solution found by ChatGPT 5.5. for small dimensional cases. It was also used in editing and finalizing the manuscript. The author  take full responsibility for the mathematical content.
	
	\noindent \textbf{Acknowledgement} The main problem of this paper was posed as an open problem for Emléktábla workshop 2026, Velence, Hungary.

	\newpage
	\appendix
	
	\section{Proof of Lemma \ref{lem:roots-unity}}
	
	\begin{proof}
		The roots-of-unity filter gives
		\[
		\frac{1}{s}\sum_{j=0}^{s-1}\omega^{j(k-a)}
		=
		\begin{cases}
			1, & k\equiv a\pmod s,\\
			0, & k\not\equiv a\pmod s.
		\end{cases}
		\]
		Therefore
		\begin{align*}
			R_{m,s}(a)
			&=
			\sum_{k=0}^m
			\binom{m}{k}
			\frac{1}{s}
			\sum_{j=0}^{s-1}\omega^{j(k-a)}
			=
			\frac{1}{s}
			\sum_{j=0}^{s-1}
			\omega^{-aj}
			\sum_{k=0}^m\binom{m}{k}\omega^{jk}
			=
			\frac{1}{s}
			\sum_{j=0}^{s-1}
			\omega^{-aj}(1+\omega^j)^m,
		\end{align*}
		which proves \eqref{eq:ramus-identity}.
		
		Since
		\[
		1+\omega^j
		=
		2e^{\pi i j/s}
		\cos\left(\frac{\pi j}{s}\right),
		\]
		the \(j=0\) term in \eqref{eq:ramus-identity} is \(2^m/s\). Hence
		\[
		\left|
		R_{m,s}(a)-\frac{2^m}{s}
		\right|
		\leq
		\frac{2^m}{s}
		\sum_{j=1}^{s-1}
		\left|
		\cos\left(\frac{\pi j}{s}\right)
		\right|^m.
		\]
		Using symmetry and
		\[
		\cos x
		\leq
		\exp\left(-\frac{2x^2}{\pi^2}\right),
		\qquad
		0\leq x\leq\frac{\pi}{2},
		\]
		we obtain
		\[
		\left|
		R_{m,s}(a)-\frac{2^m}{s}
		\right|
		\leq
		\frac{2^{m+1}}{s}
		\sum_{j=1}^{\lfloor s/2\rfloor}
		\exp\left(-\frac{2mj^2}{s^2}\right),
		\]
		which proves \eqref{eq:fourier-error}.
		
		Since the function
		\[
		x\longmapsto
		\exp\left(-\frac{2mx^2}{s^2}\right)
		\]
		is decreasing on \([0,\infty)\), comparison with the Gaussian integral gives
		\[
		\sum_{j=1}^{\infty}
		\exp\left(-\frac{2mj^2}{s^2}\right)
		\leq
		\int_0^\infty
		\exp\left(-\frac{2mx^2}{s^2}\right)\,dx
		=
		\sqrt{\frac{\pi}{8}}\frac{s}{\sqrt m}.
		\]
		Substituting this estimate into \eqref{eq:fourier-error} yields
		\[
		\left|
		R_{m,s}(a)-\frac{2^m}{s}
		\right|
		\leq
		\sqrt{\frac{\pi}{2}}\frac{2^m}{\sqrt m},
		\]
		uniformly in \(a\).
		Substitution into
		\eqref{eq:fourier-error} proves \eqref{eq:roots-bound}.
		
		Finally, if \(m/s^2\to\infty\), then
		\[
		\sum_{j=1}^{\infty}
		\exp\left(-2\frac{m}{s^2}j^2\right)
		\leq
		\sum_{j=1}^{\infty}
		\exp\left(-2\frac{m}{s^2}j\right)
		=
		o(1).
		\]
		Thus \eqref{eq:fourier-error} gives
		\[
		\left|
		R_{m,s}(a)-\frac{2^m}{s}
		\right|
		=
		o\left(\frac{2^m}{s}\right)
		\]
		uniformly in \(a\), proving \eqref{eq:roots-asymptotic}.
	\end{proof}
\end{document}